\numberwithin{equation}{section}
\theoremstyle{plain}
     \newtheorem{theorem}[equation]{Theorem}
\theoremstyle{definition}
       \newtheorem{sinnada}[equation]{}
\newcommand{\BE}{\begin{equation}}
\newcommand{\EE}{\end{equation}}
\newcommand{\cqd}{\hfill$\Box$}  
\newcommand{\cc}{\mathcal}
\newcommand{\im}{\imath}
\newcommand{\elu}{$l_u$}
\newcommand{\mr}[1]{\overset {#1} {\longrightarrow}}
\newcommand{\mmr}[1]{\overset {#1} {\hookrightarrow}}
\begin{document}

\title{On the equivalence between MV-algebras and $l$-groups with strong unit}
\author{Eduardo J. Dubuc and Yuri A. Poveda}


\maketitle

{\bf Introduction.}
This note concerns the category $\mathcal{M}$ of MV-algebras and its full subcategory $\mathcal{CM}$ of totally ordered MV-algebras, for short, \mbox{\emph{MV-chains},} and the category 
$\mathcal{G}_u$ of abelian l-groups with strong unit, for short \mbox{\emph{\elu -groups},} with  unital l-homomorphism as arrows, and its full subcategory 
$\mathcal{CG}_u$ of totally ordered abelian 
l-groups with strong unit, for short 
\mbox{\emph{\elu-chains}.}

In \cite{Ch} C.C. Chang proved that any $MV$-chain $A$ was isomorphic to the segment $A \cong \Gamma(A^*, u)$ of a 
\elu-chain $A^*$. He constructs $A^*$ by the simple intuitive idea of putting denumerable copies of $A$ on top of each other (indexed by the integers). Moreover, he also show that any \elu-chain $G$ can be recovered from its segment, since $G \cong \Gamma(G, u)^*$, 
 establishing an equivalence of categories.
In \cite{M} D. Mundici extended this result to arbitrary 
$MV$-algebras and \elu-groups. He takes a representation of $A$ as a sub-direct product of chains $A_i$, and observes that 
$A \mmr{} \prod_i A_i^*$. Then he let $A^*$ be the 
subgroup generated by $A$ inside $\prod_i A_i^*$. He proves that this idea works, and establish an equivalence of categories in a rather elaborate way by means of his concept of \emph{good sequences} and its complicated arithmetics. In this note, essentially self-contained except for Chang's result, we give a simple proof of this equivalence taking advantage directly of the arithmetics of the product 
$l$-group $\prod_i A_i^*$, avoiding entirely the notion of good sequence.    

\section{Preliminaries on $l$-groups and MV-algebras}

In this section we refer the reader to the monograph \cite{CDM} rather than to the original sources. We consider products $\;\prod_{i \in I} X_i \;$ to be the set of sections of the projection from the disjoint union 
$\;\coprod_{i \in I} X_i  \mr{\pi}  I$. 
In this way, the elements of the product are functions 
$I \mr{\sigma} \coprod\limits_{i \in I} X_i$ with values in the fibers, for $i \in I, \; \sigma(i) \in X_i$, and the algebraic and order structures are the \emph{pointwise} structures. De domain $X$ of an injective morphism 
$X \mmr{} \prod_{i \in I} X_i$ is a \emph{sub direct product} of the family $\{X_i\}_{i\in I}$. An standard application of Zorn's lemma proves that abelian 
\mbox{$l$-group} and $MV$-algebras are  subdirect products of the respective totally ordered structures. Given a  \mbox{\elu-group} 
$(G,u)$ and $MV$-algebra $A$, we have the canonical sub direct product representations:
\BE \label{canonical}
(G,u) \mmr{} 
      \textstyle\prod\limits_{P \in Sp(G)} (G/P, u_P),
\; x \mapsto \widehat{x}^g.
\hspace{3ex}
A \mmr{} \textstyle\prod\limits_{P \in Sp(A)} A/P,
\; a \mapsto \widehat{a}^v, 
\EE
where $Sp(G)$, $Sp(A)$ are the sets of all prime $l$-ideals (resp. ideals). For each $P$ we have the quotient 
$(G,u) \mr{\rho} (G/P, u_P)$,  (resp. $A \mr{\rho} A/P)$ 
 and
$\widehat{x}^g(P) = \rho(x) = [x]_P$,  (resp. $\widehat{a}^v(P) = \rho(a) = [a]_P$). Note that $\widehat{u}^g > 0$, but it is not a strong unit for the product $l$-group, which in general will not be a \elu-group (clearly when $Sp(G)$ is infinite they are unbounded sections).

\vspace{1ex}

For $G$ any abelian $l$-group, $x \in G$, set 
 $\;x^+ = 0 \vee x,\;\; x^- = 0 \vee -x\;$. Then:
\BE \label{absolutevalue}
 x = x^+ - x^-,\;\;\;  
|x| = x^+ + x^-, \;\;\;|x + y| \leq |x| + |y|.
\EE

Given any \elu-group $(G,u)$, it is not straightforward but also not difficult to check that the segment
\mbox{$\Gamma(G,u) = [0,u] = \{x \in G \;|\; 0\leq x \leq u\}$} is a MV-algebra with operations defined as follows: 
\begin{equation} \label{segment} 
x \oplus y = u \wedge (x+y), \hspace{4ex}
\neg x = u - x,
\end{equation}
and that the lattice structure induced by $G$ is the natural $MV$-algebra lattice structure.
The segment $\Gamma(G, u)$ is defined on arrows by restriction and therefore determines a functor $\cc{G}_u \mr{\Gamma} \cc{M}$. 
Recall now the following:
\begin{theorem} ${}$ \label{cignolitorrens}

a) (\cite[7.2.4]{CDM}). For any $l$-ideal $J$ of $G$, 
$J \cap [0, u]$ is an ideal of $\Gamma(G,u)$, and the morphism below is well defined and an isomorphism.  
$$
\Gamma(G,u)/(J \cap [0, u])Ê\mr{\cong} \Gamma(G/J,u_J),
\;\; [x]_{J \cap [0,u]} \longmapsto [x]_J.
$$

\vspace{1ex}

b) (\cite[7.2.2, 7.2.3]{CDM}) For any \elu-group $(G,u)$, the correspondence 
\mbox{$Sp(G) \mr{\cong} Sp(\Gamma(G,u))$,} $P \longmapsto P \cap [0,u]$, is bijective.
\end{theorem}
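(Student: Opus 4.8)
The plan is to prove the two statements of Theorem \ref{cignolitorrens} largely by direct verification, since both are foundational facts about the interaction between the segment functor $\Gamma$ and the ideal/spectral structure. For part (a), I would first recall that an $l$-ideal $J$ of $G$ corresponds to a congruence on $G$, and that the MV-ideal of $\Gamma(G,u)$ is $J \cap [0,u]$. The natural map $[x]_{J \cap [0,u]} \mapsto [x]_J$ sends an element of the quotient MV-algebra into $\Gamma(G/J, u_J)$; the first task is to check it is \emph{well defined}, i.e.\ that if $x,y \in [0,u]$ satisfy $x \equiv y$ modulo the MV-ideal $J \cap [0,u]$, then $x - y \in J$. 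This amounts to unwinding the definition of MV-congruence in terms of the operations $\oplus$ and $\neg$ from \eqref{segment} and comparing it with the $l$-group congruence determined by $J$, using $|x-y| \in J$ via the inequality in \eqref{absolutevalue}.

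Next I would verify that the map is a homomorphism of MV-algebras, which reduces to checking compatibility with $\oplus$ and $\neg$: since $x \oplus y = u \wedge (x+y)$ and $\neg x = u - x$, and the quotient map $G \mr{\rho} G/J$ is an $l$-homomorphism sending $u$ to $u_J$, both operations are preserved on the nose. Injectivity follows because the kernel congruence is exactly $J \cap [0,u]$ by construction, so the map is injective by the correspondence between ideals and congruences. The \textbf{main obstacle} is surjectivity: given $[z]_J \in \Gamma(G/J, u_J)$ with $0 \le [z]_J \le u_J$, I must produce a representative $x \in [0,u]$ with $[x]_J = [z]_J$. The natural candidate is $x = (0 \vee z) \wedge u$; since $\rho$ preserves $\vee$, $\wedge$, $0$ and $u$, one computes $[x]_J = (0 \vee [z]_J) \wedge u_J = [z]_J$ using $0 \le [z]_J \le u_J$, and $x \in [0,u]$ by construction. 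This truncation argument is the crux of the whole statement.

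For part (b), I would set up the bijection $P \mapsto P \cap [0,u]$ and its putative inverse, sending an MV-ideal $Q$ of $\Gamma(G,u)$ to the $l$-ideal of $G$ it generates (equivalently, $\{x \in G : |x| \wedge u \in Q\}$ or the analogous saturation). The strategy is to show that these two assignments are mutually inverse and that both preserve primeness. That $P \cap [0,u]$ is an ideal of $\Gamma(G,u)$ is given by part (a); that it is prime when $P$ is prime follows from part (a) together with the fact that $P$ is prime iff $G/P$ is totally ordered, which by the isomorphism of part (a) is equivalent to $\Gamma(G/P, u_P) = \Gamma(G,u)/(P \cap [0,u])$ being an MV-chain, i.e.\ to $P \cap [0,u]$ being prime. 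This last equivalence is precisely what makes the bijection restrict correctly to the spectra.

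The verification that the two maps are inverse to each other is where the strong unit is used essentially: every element of $G$ is dominated in absolute value by some multiple $nu$ of the unit, so an $l$-ideal is determined by its intersection with $[0,u]$ via the operations available in the segment. I would show $P = \{x \in G : |x| \wedge u \in P \cap [0,u]\}$ by proving that $x \in P$ iff $|x| \in P$ iff every truncation $|x| \wedge u \in P \cap [0,u]$, invoking that $P$ is an $l$-ideal closed under the relevant lattice and group operations and that $|x| = \bigvee_n (|x| \wedge nu)$ is controlled by finitely many unit-truncations because $|x| \le nu$ for some $n$. Citing \cite[7.2.2, 7.2.3]{CDM} as indicated, I expect this direction to be the more delicate bookkeeping, but conceptually it rests entirely on the strong-unit domination together with the translation dictionary between $l$-group and MV-operations already recorded in \eqref{absolutevalue} and \eqref{segment}.
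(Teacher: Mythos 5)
Your proposal is correct, and for part (a) it follows essentially the paper's own route: well-definedness and injectivity come from the observation that for $x \in [0,u]$ membership in $J$ and in $J \cap [0,u]$ coincide (equivalently, inside the segment the MV-distance is just $|x-y|$), and surjectivity is the truncation argument --- the paper takes a representative $y$ and passes to $y \wedge u$, while your $(0 \vee z) \wedge u$ is the same idea carried out slightly more carefully (the paper's single truncation tacitly ignores the lower bound). The genuine divergence is in part (b): the paper does not prove the ideal correspondence at all --- it explicitly outsources the bijection between $l$-ideals of $G$ and ideals of $\Gamma(G,u)$ to \cite[Theorem 7.2.2]{CDM}, calling it ``elementary but not straightforward'', and then only adds that primeness transfers, which follows from part (a) via the characterization of primes by totally ordered quotients. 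You instead sketch the content of that citation: the explicit inverse $Q \mapsto \{x \in G : |x| \wedge u \in Q\}$ together with strong-unit domination. This buys self-containedness, but be aware that closing your sketch requires precisely the ``not straightforward'' facts that \cite{CDM} supplies: that this set is closed under addition (via $(a+b) \wedge u \leq (a \wedge u) + (b \wedge u)$ for $a,b \geq 0$), and that $|x| \wedge u \in P$ forces $x \in P$, which rests on the $l$-group inequality $a \wedge nb \leq n(a \wedge b)$ for $a,b \geq 0$ --- your appeal to $|x| = \bigvee_n (|x| \wedge nu)$ and finitely many truncations does not by itself yield this. Finally, both you and the paper leave tacit the one nontrivial point in the primeness transfer: that an $l$-group with strong unit whose segment is totally ordered is itself totally ordered (if $a \wedge b = 0$ then $a \wedge u = 0$ or $b \wedge u = 0$, and then $a \wedge nu = 0$ with $a \leq nu$ forces $a = 0$); spelling that out would make your part (b) fully rigorous where the paper simply says ``it immediately follows from a)''.
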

\begin{proof} ${}$

a) For $x \in G$, $0 \leq x \leq u$, 
$\;x \in J \iff x \in J \cap [0,u]$. This shows it is
well defined and injective. For the surjectivity, take 
$y \in G$ such that $[y]_J = [x]_J$. Then 
$[y \wedge u]_J = [y]_J \wedge [u]_J = [x]_J \wedge [u]_J 
= [x]_J$.

\vspace{1ex}

b) This correspondence holds between the sets of all ideals, it is elementary \emph{but not} straightforward, we refer the reader to \mbox{\cite[Theorem 7.2.2]{CDM}} for a detailed proof. It immediately follows from a) that $J$ is prime if and only if 
$(J \cap [0, u])$ is prime.     
\end{proof}
We establish now the key result of this paper:
\begin{theorem} \label{key}
Let $G$ be any abelian $l$-group, $u \in G, u > 0$, and let $[0,u]$ be the segment
$[0,u] = \{a \in G \:|\: 0 \leq a \leq u \}$. Let 
$[0,u]^* \subset G$ be the subgroup generated by $[0,u]$. Then:

\vspace{1ex}

a)

\vspace{-5.5ex}

$$
[0,u]^* \subseteq 
\{x \in G  \; such\;that\;  \exists\, n \geq 0,\; |x| \leq nu \}.
$$
It follows that  $[0,u]^*$ is a \elu-group with strong unit $u$.

\vspace{1ex}

b) Given any sub $MV$-algebra $A \subset [0, u]$, let 
$A^* \subset [0,u]^*$ be the subgroup generated by $A$, then $A^*$ is a \elu-group with strong unit $u$, and $A = \Gamma(A^*, u)$.

\vspace{1ex}

c)

\vspace{-5.5ex}

$$
[0,u]^* = 
\{x \in G  \; such\;that\;  \exists\, n \geq 0,\; |x| \leq nu \}.
$$
It follows that any \elu-group is generated by its segment. 
\end{theorem}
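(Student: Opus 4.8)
The plan is to dispatch (a) and (c) by elementary manipulation of the positive cone, and to concentrate the real work in (b). Every element of $[0,u]^*$ is a difference $p-q$ of two sums of elements of $[0,u]$; if $p$ is a sum of $m$ and $q$ of $k$ such elements, then $0\le p\le mu$ and $0\le q\le ku$, and \eqref{absolutevalue} gives $|p-q|\le(m+k)u$, which is the inclusion in (a). To upgrade ``subgroup'' to ``$l$-subgroup with strong unit'' I would first record the auxiliary fact: \emph{if $0\le w\le nu$ then $w\in[0,u]^*$}, proved by induction on $n$ from $w=(w\wedge u)+(w-u)^+$, since $w\wedge u\in[0,u]$ and $0\le(w-u)^+\le(n-1)u$. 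Closure under $\vee$ then follows from $x\vee y=x+(y-x)^+$ together with $0\le(y-x)^+\le|y-x|$ and the auxiliary fact, and closure under $\wedge$ follows since $[0,u]^*$ is a subgroup; the bound $|x|\le nu$ makes $u$ a strong unit. Part (c) is the same auxiliary fact applied to $x^+$ and $x^-$ via $x=x^+-x^-$, and the closing remark is just the definition of strong unit.

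For (b), write $S$ for the submonoid of $G$ generated by $A$ (finite sums of elements of $A$, including $0$), so $A^*=S-S$ and the strong-unit claim is inherited from (a). I would prove a \emph{peeling lemma}: if $p\in S$, $b\in A$ and $b\le p$, then $p-b\in S$. Writing $p=a_1+\cdots+a_m$ and using the truncated difference $a\ominus b=(a-b)^+\in A$ together with the identity $(x^+-c)^+=(x-c)^+$ for $c\ge0$, one checks $p-b=\sum_i\bigl(a_i\ominus b^{(i-1)}\bigr)$ with $b^{(0)}=b$, $b^{(i)}=b^{(i-1)}\ominus a_i$, all summands in $A$ because the residue $b^{(m)}=(b-p)^+$ vanishes. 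Iterating over the summands of an arbitrary element of $S$ shows $S=\{x\in A^*:\,x\ge0\}$, i.e.\ $S$ is exactly the positive cone of $A^*$.

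The main obstacle is then to show that $A^*$ is stable under $\wedge$ (equivalently, an $l$-subgroup); this is the one point where ``good sequences'' threaten to intervene. My device is the single $l$-group identity
\[
s\wedge t=(u\wedge s)\wedge(u\wedge t)+(s-u)^+\wedge(t-u)^+,
\]
valid for all $s,t$ because it holds in every totally ordered quotient — there one may assume $s\le t$, whereupon it collapses to $(u\wedge s)+(s-u)^+=s$ — and $l$-groups are subdirect products of chains. For $s,t\in S$ one has $u\wedge s=a_1\oplus\cdots\oplus a_m\in A$ (an induction from distributivity of $+$ over $\wedge$ and $u\wedge(u+a_{i+1})=u$), so the first summand lies in $A$; and $(s-u)^+=s-(u\wedge s)$ is, by the peeling lemma, a sum of \emph{fewer} elements of $A$ (the top term drops since $a_1\le u\wedge s$), so induction on the number of summands gives $(s-u)^+\wedge(t-u)^+\in S$. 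Hence $s\wedge t\in S$, and $s\vee t=(s+t)-(s\wedge t)\in S$ by iterating the peeling lemma; therefore $A^*=S-S$ is an $l$-subgroup.

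Finally $\Gamma(A^*,u)=A$: the inclusion $A\subseteq A^*\cap[0,u]$ is clear, and conversely any $x\in A^*$ with $0\le x\le u$ lies in the positive cone $S$, say $x=a_1+\cdots+a_m$ with $a_i\in A$; since all partial sums are bounded by $x\le u$, at each stage $\oplus$ agrees with $+$, so $x=a_1\oplus\cdots\oplus a_m\in A$. I expect the chain-reduction for the displayed identity and the bookkeeping in the peeling lemma to be the only delicate steps, everything else being routine cone arithmetic.
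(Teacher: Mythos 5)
Your proof is correct, but it takes a genuinely different route from the paper's, and it actually establishes details the paper leaves implicit. In the paper, the nontrivial inclusion in (c) (and, with it, the lattice-closure of $[0,u]^*$) is obtained by passing to a subdirect representation of $G$ as a product of chains and checking coordinatewise that $x=\sum_{k=0}^{n-1}a_k$ for the canonical choice $a_k=((x-ku)\wedge u)\vee 0$; you instead run the elementary induction $w=(w\wedge u)+(w-u)^+$ with $0\le (w-u)^+\le (n-1)u$, which needs no representation theorem at all for this step, though it loses the canonical decomposition that the paper later reinterprets as the canonical good sequence attached to $x$. The bigger difference is in (b): the paper's proof consists only of the observation that a sum of elements of $A$ lying in $[0,u]$ equals the corresponding $\oplus$-sum, hence lies in $A$; it silently assumes both that every positive element of $A^*$ is a plain sum of elements of $A$ and that $A^*$ is closed under the lattice operations of $G$. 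Your peeling lemma (the submonoid $S$ generated by $A$ is exactly the positive cone of $A^*$) and your induction on the number of summands for $\wedge$-closure supply precisely these two missing ingredients, so your (b) is a completion of the paper's sketch rather than a paraphrase of it. One simplification is available: your displayed identity needs no reduction to chains, since $l$-group lattices are distributive, so $(u\wedge s)\wedge(u\wedge t)=u\wedge(s\wedge t)$ and $(s-u)^+\wedge(t-u)^+=((s\wedge t)-u)^+$, and the identity is just $w=(u\wedge w)+(w-u)^+$ applied to $w=s\wedge t$. With that change your entire proof of Theorem \ref{key} avoids the subdirect representation theorem (hence Zorn's lemma), which is worth recording in view of the paper's closing remark that a constructive proof of the equivalence would be of interest.
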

\begin{proof} 
a):
Let $x\ \in [0,u]^*$, 
$x =\sum_{k = 0}^{n-1} \,\varepsilon_k a_k$,  
  $a_k \in [0,u]$ not necessarily distinct, and 
  $\varepsilon_k = 1$ or $\varepsilon_k = -1$.  
Then $|x| \leq \sum_{k = 0}^{n-1} |\varepsilon_k a_k| = 
 \sum_{k = 0}^{n-1} a_k \leq nu$. 

\vspace{1ex}
 
b) It only remains to check that if $0 \leq x =\sum_{k = 0}^{n-1} \,a_k \leq u$, then $x \in A$. But this is clear since in this case $x = \bigoplus_{k = 0}^{n-1} \,a_k$. 

\vspace{1ex}

c)
It remains to prove the other inclusion $\;"\supseteq"\;$: Take a family $\{G_i\}_{i \in I}$ of \mbox{$l$-chains} such that $G$ is a subdirect product. We can safely assume that $G$ is a subset 
$G \;\subset  \; \prod\limits_{i \in I} G_i$. Let $x \geq 0$ and $n \geq 0$ be such that $x \leq nu$. Let for each $i \in I$ $n_i \geq 1$ be such that
$(n_i - 1)u(i) \leq x(i) \leq n_i u(i)$. Clearly 
$n_i \leq n$ for all $i$. 
Let $a_k = (x - ku) \wedge u \vee 0$, $0 \leq k \leq n-1$. Then, $(\sum_{k = 0}^{n-1} a_k)(i) =
 \sum_{k = 0}^{n-1} \,(x(i) - ku(i)) \wedge u(i) \vee 0(i)
 = (n_i - 1) u(i) + x(i) - (n_i - 1) u(i) + 0(i) = x(i)$. Thus
 $x = \sum_{k = 0}^{n-1} a_k$. But any $x$ is the difference of two positive elements, $x = x^+ - x^-$ (\ref{absolutevalue}), so the result follows.
\end{proof}

\section{Chang's equivalence}

Given any MV-chain $A$, Chang's construction consists in the set  
$$
A^* = \{(m,a) \colon m \in \mathbb{Z}, \ \text{and} \ a\in A \}
$$
ordered lexicografically, together with the following definitions: 

\BE \label{asterisco} \EE

\vspace{-12ex}
 
\begin{align*} 
     (m+1, 0) & = (m,1) \\
(m,a) + (n,b) & = (m+n, a \oplus b) &\text{if} 
                                    \ a \oplus b  <  1 \\
(m,a) + (n,b) & = (m+n+1, a \odot b ) & \text{if} \ a 
                                    \oplus b = 1 \\
       -(m,a) & = (-m-1, \neg a).
\end{align*}

In \cite[Lemma 5]{Ch} the reader can find a careful and detailed proof that $A^*$ is a \mbox{\elu-chain} with strong unit 
\mbox{$u = (0,\, 1)$}. With this, the proof of the following is immediate:
 
\begin{theorem}\cite[Lemma 6]{Ch} \label{lemma6} 
Given any MV-chain $A$ and \elu-chain $(G, \, u)$, 
the arrows:

\vspace{-6ex}

\begin{align*} 
     A \mr{\im} \Gamma(A^*,u) \subset A^*,& 
     \hspace{6ex} \im(a) =  (0,a).
   \\
    \Gamma(G, u)^* \mr{\upsilon}  (G, u) \hspace{2.5ex},&
     \hspace{1.5ex} \upsilon(m,a) = mu + a. 
\end{align*}

\vspace{-1.5ex}

\noindent are isomorphisms.
\end{theorem}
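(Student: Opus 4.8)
The plan is to treat the two maps separately, since each is an isomorphism of a different kind, and in both cases Chang's result that $A^*$ is an \elu-chain does all the structural heavy lifting; what remains is to match up the operations and to check bijectivity.

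For $\im \colon A \to \Gamma(A^*,u)$, I would first compute the segment explicitly. Reading off the lexicographic order, $(0,0) \le (m,a) \le (0,1)$ forces $m = 0$ (the top $(0,1)$ coinciding with $(1,0)$), so that $\Gamma(A^*,u) = \{(0,a) : a \in A\}$, and hence $\im$ is automatically a bijection onto the segment. It then remains to see that $\im$ intertwines the MV-operations. Using the segment formulas \eqref{segment} together with Chang's addition rule, I would check by a two-case split — according to whether $a \oplus b < 1$ or $a \oplus b = 1$ — that $u \wedge ((0,a) + (0,b)) = (0, a \oplus b)$, and similarly that $u - (0,a) = (0, \neg a)$, the latter using that $1 \oplus \neg a = 1$ and $1 \odot \neg a = \neg a$. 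This shows $\im$ preserves $\oplus$ and $\neg$, so it is an MV-isomorphism.

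For $\upsilon \colon \Gamma(G,u)^* \to (G,u)$, $\upsilon(m,a) = mu + a$, I would proceed in four steps. First, well-definedness against the identification $(m+1,0) = (m,u)$ is immediate since $(m+1)u = mu + u$. Second, the homomorphism property: I would verify $\upsilon((m,a)+(n,b)) = (m+n)u + a + b$ by the same two-case split, noting that when $a + b \ge u$ one has $a \odot b = a + b - u$ in $\Gamma(G,u)$, which exactly absorbs the carry in $(m+n+1, a\odot b)$; preservation of negation is a one-line computation from $-(m,a) = (-m-1,\neg a)$. Third, $\upsilon$ is order-preserving directly from the lexicographic order and $0 \le a \le u$. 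Fourth, bijectivity: injectivity follows by computing the kernel, since $mu + a = 0$ with $0 \le a \le u$ forces $(m,a)$ to be $(0,0)$, using the identification to absorb the case $(-1,u)$; surjectivity uses the strong unit, for given $x \in G$ one picks the largest $m$ with $mu \le x$ and sets $a = x - mu \in [0,u]$.

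The main obstacle, such as it is, is bookkeeping rather than conceptual: keeping the carry term consistent across the addition and the identification in the two cases, and making the surjectivity argument for $\upsilon$ rest cleanly on the strong-unit bound $|x| \le nu$ (equivalently Theorem \ref{key}), which is precisely what guarantees that the greatest $m$ with $mu \le x$ exists. Once order-preservation and bijectivity are in hand, the fact that $\upsilon$ is an $\ell$-isomorphism is automatic: a bijective order-preserving group homomorphism between chains also reflects order, hence preserves $\wedge$ and $\vee$, and it sends the unit $(0,u)$ to $u$.
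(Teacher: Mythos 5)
Your proposal is correct and follows essentially the same route as the paper: both lean on Chang's Lemma 5 (that $A^*$ is an \elu-chain with unit $(0,1)$), identify $\Gamma(A^*,u)$ with $\{(0,a)\colon a \in A\}$, and invert $\upsilon$ via the integer part $n_x$ with $n_x u \leq x \leq (n_x+1)u$ and remainder $x - n_x u$, which is exactly your surjectivity step. The paper simply compresses what you spell out (the two-case carry computation, the kernel argument, order-preservation) into ``clear by construction'' and an explicit two-sided inverse, so your write-up is a legitimate expansion of the same proof rather than a different one.
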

\begin{proof}
That $\im$ is an isomorphism is clear by construction of $A^*$. On the other hand, given $x \in G$, let $n_x$ be the integer such that $n_x u \leq x \leq (n_x + 1) u$. Then, the function $x \longmapsto (n_x,\, x - n_x u)$, is an inverse for $\upsilon$.
\end{proof} 
It is easy to verify that Chang isomorphisms actually determine an equivalence of categories.
\begin{theorem}\label{chainfunctor} 
Chang's construction $A^*$ can be defined on arrows and together with the segment $\Gamma$ determine functors in such a way that $\im$, 
$\upsilon$ become natural transformations. Thus they establish an equivalence of categories: 
$$
\xymatrix@1
  {
   \mathcal{CM}\; \ar[r]^-{(-)^*} & \;\mathcal{CG}_u
  }
, \hspace{4ex}      
\xymatrix@1
  {
   \mathcal{CG}_u\; \ar[r]^-\Gamma & \;\mathcal{CM}
  }
$$
\end{theorem}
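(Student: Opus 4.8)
The plan is to define the action of $(-)^*$ on an MV-chain morphism $f \colon A \to B$ by the evident formula $f^*(m,a) = (m, f(a))$, and then to carry out three verifications in order: that $f^*$ is a unital $l$-homomorphism, that $(-)^*$ is functorial, and that the families $\im$ and $\upsilon$ are natural with respect to these assignments. Once $f^*$ is known to be a morphism, functoriality is immediate, since the first coordinate is left untouched and $f$ acts coordinatewise in the second, giving $(\mathrm{id}_A)^* = \mathrm{id}_{A^*}$ and $(g \circ f)^* = g^* \circ f^*$.

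The substantive step is checking that $f^*$ respects the structure of $A^*$. Preservation of order, of the unit $u = (0,1)$, and of negation will be direct: the order is lexicographic and $f$ is monotone; $f^*(0,1) = (0, f(1)) = (0,1)$; and $f^*(-(m,a)) = (-m-1, f(\neg a)) = (-m-1, \neg f(a)) = -f^*(m,a)$. Preservation of addition is where care is needed, because the definition of $+$ in \eqref{asterisco} branches on whether $a \oplus b < 1$ or $a \oplus b = 1$, and $f$ may collapse a strict inequality $a \oplus b < 1$ in $A$ into an equality $f(a) \oplus f(b) = 1$ in $B$.

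I expect this branch mismatch to be the only real obstacle, and the resolution will exploit that $A$ is a chain: there $a \oplus b < 1$ forces $a \le \neg b$, hence $a \odot b = 0$. Thus in the troublesome case, where $a \oplus b < 1$ in $A$ but $f(a) \oplus f(b) = 1$ in $B$, one gets $f(a) \odot f(b) = f(a \odot b) = f(0) = 0$; the second branch applied in $B$ then yields $(m+n+1, 0)$, which coincides via $(m+n+1,0) = (m+n,1)$ with the value $(m+n, 1)$ obtained by applying the first branch in $A$ and transporting it by $f^*$. All the other combinations match automatically, because the identity $f(a \oplus b) = f(a) \oplus f(b)$ already determines which branch $B$ uses; in particular $a \oplus b = 1$ forces $f(a)\oplus f(b)=1$.

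With $(-)^*$ established as a functor $\cc{CM} \mr{} \cc{CG}_u$, naturality reduces to a one-line check on each square. For $\im$ one computes $\Gamma(f^*)\,\im_A(a) = f^*(0,a) = (0, f(a)) = \im_B\, f(a)$. For $\upsilon$, given a unital $l$-homomorphism $h \colon (G,u) \to (H,v)$, one computes $h\,\upsilon_G(m,a) = m\,h(u) + h(a) = mv + h(a) = \upsilon_H\,(\Gamma h)^*(m,a)$, using $h(u)=v$. Since $\im$ and $\upsilon$ are already isomorphisms by Theorem~\ref{lemma6}, these natural families are natural isomorphisms $\mathrm{Id}_{\cc{CM}} \cong \Gamma \circ (-)^*$ and $(-)^* \circ \Gamma \cong \mathrm{Id}_{\cc{CG}_u}$, which is precisely an equivalence of categories.
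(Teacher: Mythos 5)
Your proposal is correct and takes essentially the same route as the paper: the same definition $h^*(m,a) = (m, h(a))$ on arrows, the same one-line naturality computations for $\im$ and $\upsilon$, and the same conclusion that these natural isomorphisms yield the equivalence. The only difference is that you work out the point the paper dismisses as ``straightforward to check,'' namely that $h^*$ preserves addition despite the branching in the definition; your resolution of the mismatched case ($a \oplus b < 1$ in $A$ but $h(a) \oplus h(b) = 1$ in $B$) via the chain property $a \oplus b < 1 \Rightarrow a \odot b = 0$ and the identification $(m+n+1, 0) = (m+n, 1)$ is exactly the right argument.
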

\begin{proof} Given $A \mr{h} B$ in $\mathcal{CM}$,  define $A^* \mr{h^*} B^*$ by $h^*(n, a) = (n, h(a))$. Clearly the equation 
$(h_2h_1)^* = h_2^* h_1^*$ holds. From definitions \ref{asterisco} it is straightforward to check that $h^*$ is well defined and that it is a \elu-morphism. 

The naturality of $\im$ and $\upsilon$ consists on the commutativity of the following diagrams:
\BE \label{chainfunctordiagram}
\xymatrix
   {
     A \ar[d]_h \ar[r]^-\im  
  &  \Gamma(A^*, u)\:  \ar[d]^{\Gamma{(h^*)}}
                         \ar@{^{(}->}[r]
  &  A^* \ar[d]^{h^*} 
  \\ B  \ar[r]^-\im  
  &  \Gamma(B^*, u)\: \ar@{^{(}->}[r]
  & B^*
   }
\hspace{8ex}
\xymatrix
   {
     \Gamma(G, u)^* \ar[d]^{(\Gamma{\varphi})^*} 
                    \ar[r]^-\upsilon  
   & (G, u) \ar[d]^-{\varphi}  
  \\ \Gamma(H, v)^* \ar[r]^-\upsilon 
   & (H, u)
   }
\EE
that we check as follows: 
Let $a \in A$, then
$\Gamma(h^*)\, \im(a) = \Gamma(h^*)(0, a) = h^*(0, a) = (0, h(a)) = \im \, h(a).
$
Let $(n, x) \in \Gamma(G, u)^*$, then
$
\varphi \, \upsilon(n,x) =
\varphi(nu + x) = nu + \varphi(x) = 
\upsilon(n,\varphi(x)) =
\upsilon(n, (\Gamma\varphi)(x)) = 
\upsilon \,(\Gamma\varphi)^*(n, x).  
$
\end{proof}

\section{Extension of the functors, $(-)^*$ and $\Gamma$ }

Consider the following diagrams:
$$
\xymatrix@C10ex
  {
   *\txt{$\mathcal{CM}\;$ \\ { }} \ar@<1ex>[r]^-{(-)^*}      
                                  \ar@{^{(}->}[d]
   & 
   *\txt{$\;\,\mathcal{CG}_u$ \\ { }} \ar@{^{(}->}[d]
  \\
   \mathcal{M}\;  
   & \;\mathcal{G}_u 
  }
, \hspace{6ex}     
\xymatrix@C10ex
  {
   *\txt{$\mathcal{CG}_u \;$ \\ { }} \ar@<1ex>[r]^-\Gamma 
                                     \ar@{^{(}->}[d]
   & 
   *\txt{$\;\,\mathcal{CM}\;$ \\ { }} \ar@{^{(}->}[d] 
  \\
   \mathcal{G}_u\; \ar[r]^-\Gamma & \;\mathcal{M} 
  }
$$
We will complete the diagram on the left by extending the Chang construction into a functor 
$\mathcal{M}\; \mr{(-)^*} \; \mathcal{G}_u$, and prove that this extends the equivalence in the first row into an equivalence in the second row.

\vspace{1ex}

Given any $MV$-algebra $A$ we will define the \elu-group $A^*$ as a subdirect product of the family of 
\elu-chains $\{(A/P)^*\}_{P \in Sp(A)}$.
We refer to the canonical subdirect product representation of $A$ (\ref{canonical}) and to Theorem \ref{lemma6}. Consider the diagram:
$$
\xymatrix@R=1ex
      {
       *\txt{$A$ \;\;\; \\ {\;}} 
                     \ar@<1ex>@{>->}[r]^-{\widehat{(-)}^v} 
                     \ar@{>->}@(d,l)[dr]^\cong
     & *\txt{$\textstyle\prod\limits_{P \in Sp(A)} 
                                 (A/P)$ \;\; \\ \;}
                            \ar@<1ex>@{>->}[r]^{\im}
     & *\txt{$\textstyle\prod\limits_{P \in Sp(A)} 
                               (A/P)^*$ \;\; \\ \;}
     \\
     & {\;{A^\circ} \subset A^* \;}
       \ar@{^{(}->}@(r,d)[ur]
      }
$$
where ${A^\circ}$ denotes the image subset of the composite arrow $\im \, \widehat{(-)}^v$, and we let $A^*$ be the subgroup generated  by ${A^\circ}$, and 
$u = \im \, \widehat{1}^v$. For $a \in A$, 
\mbox{$\im\, \widehat{a}^v(P) = \im([a]_P)$,} thus 
$\im\,\widehat{a}^v \leq u$, i.e. ${A^\circ} \subset [0, u]$. From Theorem \ref{key} b) we have:
\begin{theorem}
$A^*$ is a \elu-group with strong unit $u$, and 
${A^\circ} = \Gamma(A^*, u)$. It follows that 
$A \mr{\im \,\widehat{(-)}^v} \Gamma(A^*, u)$ is an isomorphism.
\cqd
\end{theorem}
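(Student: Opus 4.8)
The plan is to read this statement as nothing more than an application of Theorem~\ref{key} b) to the ambient abelian $l$-group $G = \prod_{P \in Sp(A)}(A/P)^*$ equipped with the positive element $u = \im\,\widehat{1}^v$. Each factor $(A/P)^*$ is an \elu-chain by Chang's construction, so the product is an abelian $l$-group, and at every coordinate $u$ takes the value $(0,1) > 0$, so $u > 0$ in $G$. Thus the hypotheses of Theorem~\ref{key} are in place as soon as one checks that $A^\circ$ is a sub $MV$-algebra of the segment $[0,u]$ of $G$. I would stress at the outset that I do \emph{not} need the product to be an \elu-group (it is not, as already noted after (\ref{canonical})); Theorem~\ref{key} only asks for an abelian $l$-group carrying a positive element, which is exactly the situation here.

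First I would verify that $A^\circ$ is a sub $MV$-algebra of $[0,u]$. The canonical representation $\widehat{(-)}^v$ of (\ref{canonical}) is an injective $MV$-homomorphism, and by Theorem~\ref{lemma6} the map $\im$ restricts, at each coordinate $P$, to an $MV$-isomorphism $A/P \cong \Gamma((A/P)^*, u)$. Since a section of the product lies in $\Gamma\big(\prod_P (A/P)^*,\, u\big)$ precisely when each of its values lies in $\Gamma((A/P)^*, u)$, one has the identification
$$
\Gamma\Big(\textstyle\prod_P (A/P)^*,\, u\Big) \;=\; \textstyle\prod_P \Gamma\big((A/P)^*,\, u\big).
$$
Hence the pointwise $\im$ is an $MV$-isomorphism of $\prod_P (A/P)$ onto the segment $[0,u]$ of $G$, the composite $\im\,\widehat{(-)}^v$ is an injective $MV$-homomorphism with values in $[0,u]$, and its image $A^\circ$ is therefore a sub $MV$-algebra of $[0,u]$.

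With this secured, Theorem~\ref{key} b) applies verbatim with $A^\circ$ playing the role of the sub $MV$-algebra $A \subset [0,u]$: the subgroup $A^*$ generated by $A^\circ$ is an \elu-group with strong unit $u$, and $A^\circ = \Gamma(A^*, u)$. For the concluding clause I would observe that $A \mr{\im\,\widehat{(-)}^v} A^\circ$ is by construction a surjective, and (from the previous paragraph) injective, $MV$-homomorphism, hence an $MV$-isomorphism onto $A^\circ$; combining this with $A^\circ = \Gamma(A^*, u)$ immediately gives that $A \mr{\im\,\widehat{(-)}^v} \Gamma(A^*, u)$ is an isomorphism.

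The only point that requires genuine care — though it is bookkeeping rather than a real difficulty — is the segment identification $\Gamma(\prod_P (A/P)^*, u) = \prod_P \Gamma((A/P)^*, u)$ together with the coordinatewise $MV$-morphism property of $\im$, since these are what guarantee simultaneously that $A^\circ$ sits inside $[0,u]$ and is closed under the $MV$-operations there. Everything else is a direct invocation of Theorem~\ref{key} b), with no appeal to good sequences and no need for the product to be unital. \cqd
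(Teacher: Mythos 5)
Your proposal is correct and takes essentially the same route as the paper: the paper also obtains this theorem as a direct application of Theorem~\ref{key}~b) to the image $A^\circ$ of the composite $\im\,\widehat{(-)}^v$ sitting inside the segment $[0,u]$ of the (non-unital) product $l$-group $\prod_{P \in Sp(A)}(A/P)^*$, which is why its proof is reduced to the remark ``From Theorem~\ref{key}~b) we have'' together with the prior observation that $A^\circ \subset [0,u]$. The only difference is one of detail: you explicitly verify, via the identification $\Gamma\bigl(\prod_P (A/P)^*, u\bigr) = \prod_P \Gamma\bigl((A/P)^*, u\bigr)$, that $A^\circ$ is a sub $MV$-algebra of $[0,u]$ and not merely a subset, a point the paper leaves implicit.
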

\begin{theorem} \label{upsiloniso}
Chang isomorphisms $\upsilon$ in Theorem \ref{lemma6} on the fibers, restricted to $\Gamma(G, u)^*$, determine an isomorphism 
$\Gamma(G, u)^* \mr{\upsilon} (G, u)$. 
\end{theorem}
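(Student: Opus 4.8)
The plan is to exhibit $\upsilon$ as the restriction of a componentwise isomorphism and then to reduce the claim to the fact that both $l$-groups in sight are generated by their segments. I identify $G$ with its canonical image $\widehat{G}^g \subset \prod_{P \in Sp(G)} (G/P)$ from (\ref{canonical}), and recall that $\Gamma(G,u)^*$ sits inside $\prod_{P} \bigl(\Gamma(G,u)/(P\cap[0,u])\bigr)^*$. Using the two identifications of Theorem \ref{cignolitorrens}---namely $Sp(G) \cong Sp(\Gamma(G,u))$ via $P \mapsto P \cap [0,u]$, and $\Gamma(G,u)/(P\cap[0,u]) \cong \Gamma(G/P, u_P)$---each fibre becomes $\Gamma(G/P, u_P)^*$, and Theorem \ref{lemma6} supplies the Chang isomorphism $\upsilon_P \colon \Gamma(G/P, u_P)^* \mr{\cong} (G/P, u_P)$, $\upsilon_P(m,a) = m u_P + a$. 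The map $\upsilon$ is then the restriction to $\Gamma(G,u)^*$ of the componentwise map $\prod_P \upsilon_P$. Since each $\upsilon_P$ is an $l$-group isomorphism, $\prod_P \upsilon_P$ is an isomorphism of the product $l$-groups (which need not be \elu-groups); in particular it is injective, hence so is its restriction $\upsilon$. It therefore suffices to prove that $\upsilon$ maps $\Gamma(G,u)^*$ onto $\widehat{G}^g$.

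The key step is the computation of $\upsilon$ on the segment. For $a \in [0,u] = \Gamma(G,u)$ I would trace $a$ through the construction fibre by fibre: the canonical representation gives $\widehat{a}^v(P) = [a]_{P \cap [0,u]}$, which under the isomorphism of Theorem \ref{cignolitorrens}a) corresponds to $[a]_P \in \Gamma(G/P, u_P)$; the fibrewise Chang embedding $\im$ sends it to $(0, [a]_P)$; and finally $\upsilon_P(0, [a]_P) = 0\cdot u_P + [a]_P = [a]_P = \widehat{a}^g(P)$. Thus $\upsilon(\im\,\widehat{a}^v) = \widehat{a}^g$ for every $a \in [0,u]$, so $\upsilon$ carries the segment of $\Gamma(G,u)^*$, i.e.\ the image $\im\,\widehat{(-)}^v([0,u])$, bijectively onto the set of canonical images $\{\widehat{a}^g : a \in [0,u]\}$. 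In passing this confirms that $\upsilon$ fixes the unit, since $\upsilon(u) = \upsilon(\im\,\widehat{1}^v) = \widehat{u}^g$.

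Finally I would conclude by a generation argument. By construction $\Gamma(G,u)^*$ is the subgroup generated by its segment, and by Theorem \ref{key}c)---applicable because $u$ is a strong unit of $G$, so every $x \in G$ satisfies $|x| \leq nu$ for some $n$---the group $G$ is generated as a group by its segment $[0,u]$, whence $\widehat{G}^g$ is generated by the canonical image of $[0,u]$. A group homomorphism sends the subgroup generated by a set onto the subgroup generated by the image of that set; combined with the previous paragraph this yields $\upsilon(\Gamma(G,u)^*) = \widehat{G}^g$. Being an injective, surjective, order- and unit-preserving map between the two \elu-groups, $\upsilon$ is the asserted isomorphism $\Gamma(G,u)^* \mr{\cong} (G,u)$.

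I expect the only delicate point to be the bookkeeping in the second paragraph: one must check that the three successive identifications (of spectra, of quotient segments, and of Chang constructions) compose so that the fibrewise map reproduces \emph{exactly} the canonical representation $x \mapsto \widehat{x}^g$ of $G$. Once this compatibility is pinned down, the reduction to generation and the verification that $\upsilon$ is order- and unit-preserving are entirely formal.
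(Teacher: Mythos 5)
Your proof is correct and takes essentially the same route as the paper's: the same factorization of $\upsilon$ through $\prod_{P}\Gamma(G/P,u_P)^*$ via Theorem \ref{cignolitorrens}, the same fibrewise computation $\upsilon(\im\,\widehat{a}^v)=\widehat{a}^g$ on generators, injectivity from the componentwise Chang isomorphisms, and surjectivity from Theorem \ref{key} c). Your explicit generation argument (a homomorphism carries the subgroup generated by a set onto the subgroup generated by its image) merely spells out what the paper compresses into ``the surjectivity follows immediately from Theorem \ref{key}, c)''.
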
 
\begin{proof}
Consider the diagram:
$$
\xymatrix
   {
    \Gamma(G, u)^*\; \ar@{-->}[d]^{\upsilon} 
               \ar@<-0.5ex>@{^{(}->}[r]
  & \prod\limits_{P \in Sp(G)} \,(\Gamma(G, u)/P
      \cap[0,u])^* \ar[r]^-{\cong}
  & \prod\limits_{P \in Sp(G)} \,\Gamma(G/P, [u]_P)^* 
      \ar[d]^-{\upsilon}_-{\cong}
  \\
    (G, u)\;\; \ar@{>->}[rr]^-{\widehat{(-)}^g}
  && \prod\limits_{P \in Sp(G)} \,(G/P, [u]_P)
  }
$$
The arrows on the top are justified by theorem \ref{cignolitorrens} b) and a) respectively.  We show now that going all the way right and down in the diagram factors through $(G, u)$. It is enough to consider generators 
\mbox{$\im\,\widehat{x}^v \in {\Gamma(G, u)^\circ}$,} 
\mbox{$x \in \Gamma(G,u)$.} Then: $\im\,\widehat{x}^v(P) =
\im([x]_{P \cap [0, u]})$, wich by 
\ref{cignolitorrens} a) corresponds to  
$\im([x]_{P})$ which in turn goes down to 
$\upsilon \,\im([x]_{P}) = [x]_P = \widehat{x}^g(P)$. It is clear that $\Gamma(G, u)^* \mr{\upsilon} (G, u)$ so determined is injective, and the surjectivity follows immediately from theorem \ref{key}, c).

\vspace{-3ex}

\BE \label{igual}
\hspace{-18ex} \txt{Remark that for any $x \in \Gamma(G,u)$, 
$\upsilon \, \im \, \widehat{x}^v = x$.}
\EE

\vspace{-3ex}

\end{proof}
We show now that the construction of $A^*$ can be defined on arrows in a functorial way. This follows by standard  techniques in representation theory.

\begin{theorem}
Let $A \mr{h} B$ a morphism of $MV$-algebras. Then there is a morphism of \elu-groups $A^* \mr{h^*} B^*$ such that the following diagram commutes:
\BE \label{h^*}
\xymatrix
   {
    A \ar[d]_h \ar[r]^-\cong 
  & {A^\circ}\: \ar@{-->}[d]^{h^*}
                  \ar@{^{(}->}[r]^-{=} 
  & \Gamma(A^*, u)\:  \ar@{-->}[d]^{\Gamma{(h^*)}}
                        \ar@{^{(}->}[r]
  & A^*\: \ar@{-->}[d]^{h^*}  \ar@{^{(}->}[r]
  & \textstyle\prod\limits_{P \in Sp(A)} (A/P)^*
                        \ar[d]^{h^*}
  \\ 
    B  \ar[r]^-\cong 
  & {B^\circ}\: \ar@{^{(}->}[r]^-{=}
  & \Gamma(B^*, u)\: \ar@{^{(}->}[r]
  & B^*\:  \ar@{^{(}->}[r]
  & \textstyle\prod\limits_{P \in Sp(B)} (B/P)^*
   }
\EE
This defines a functor $\cc{M} \mr{(-)^*} \cc{G}_u$ in such a way that the isomorphisms 
 \mbox{$A \mr{\im \,\widehat{(-)}^v} \Gamma(A^*, u)$} and 
 $\Gamma(G, u)^* \mr{\upsilon} (G, u)$ become natural transformations.
\end{theorem}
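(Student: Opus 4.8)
The plan is to define $h^*$ first on the ambient products of fibers and then restrict it to $A^* \subseteq \prod_{P} (A/P)^*$, exactly as in the rightmost column of diagram \ref{h^*}. The raw material is the contravariant functoriality of the prime spectrum: since $h$ is a morphism of $MV$-algebras, $Q \mapsto h^{-1}(Q)$ sends $Sp(B)$ to $Sp(A)$, and for each prime $Q$ of $B$ the map $h$ descends to an $MV$-chain morphism $h_Q \colon A/h^{-1}(Q) \to B/Q$, $[a]_{h^{-1}(Q)} \mapsto [h(a)]_Q$. Applying Chang's functor on chains (Theorem \ref{chainfunctor}) yields \elu-chain morphisms $(h_Q)^* \colon (A/h^{-1}(Q))^* \to (B/Q)^*$. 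I assemble these into a single map $\Phi \colon \prod_{P \in Sp(A)} (A/P)^* \to \prod_{Q \in Sp(B)} (B/Q)^*$ by the pointwise formula $\Phi(\sigma)(Q) = (h_Q)^*(\sigma(h^{-1}(Q)))$. Since each component of $\Phi$ is the projection at $h^{-1}(Q)$ followed by the \elu-chain morphism $(h_Q)^*$, and the structure on the target product is pointwise, $\Phi$ is a unital $l$-homomorphism.

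Next I would check that $\Phi$ restricts to $A^*$. It suffices to verify that $\Phi$ carries the generating set ${A^\circ}$ into ${B^\circ}$, and here lies the only real computation. On a generator $\im\,\widehat{a}^v$ with $a \in A$ one gets $\Phi(\im\,\widehat{a}^v)(Q) = (h_Q)^*(\im([a]_{h^{-1}(Q)})) = \im([h(a)]_Q) = \im\,\widehat{h(a)}^v(Q)$, using the explicit action of the Chang functor on arrows from the proof of Theorem \ref{chainfunctor} (it fixes the first coordinate). Thus $\Phi(\im\,\widehat{a}^v) = \im\,\widehat{h(a)}^v \in {B^\circ}$, which is precisely the commutativity of the leftmost square of \ref{h^*}. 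As $\Phi$ is a group homomorphism and $A^*$, $B^*$ are by definition the subgroups generated by ${A^\circ}$, ${B^\circ}$, the restriction is well defined and lands in $B^*$; call it $h^*$. It is an \elu-morphism (it preserves $u$, since $u = \im\,\widehat{1}^v$ goes to $\im\,\widehat{h(1)}^v = v$), and all the inner squares of \ref{h^*} commute because they are restrictions of $\Phi$. Functoriality, $(h_2 h_1)^* = h_2^* h_1^*$ and $(\mathrm{id})^* = \mathrm{id}$, then drops out by combining the contravariance of $Q \mapsto h^{-1}(Q)$ on spectra with the functoriality of the Chang construction on chains.

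It remains to see that the two families of isomorphisms are natural. For $\im\,\widehat{(-)}^v$ there is nothing new: its naturality square is exactly the outer left rectangle of \ref{h^*}, already established by the generator computation $h^*(\im\,\widehat{a}^v) = \im\,\widehat{h(a)}^v$. For $\upsilon$, given a unital $l$-homomorphism $\varphi \colon (G,u) \to (H,v)$, I must show $\varphi \circ \upsilon = \upsilon \circ (\Gamma\varphi)^*$ as maps $\Gamma(G,u)^* \to (H,v)$. Both sides are group homomorphisms, so it is enough to test them on the generators $\im\,\widehat{x}^v$ with $x \in \Gamma(G,u)$. On the one hand $\varphi(\upsilon(\im\,\widehat{x}^v)) = \varphi(x)$ by Remark \ref{igual}. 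On the other hand, the already-established generator identity applied to the $MV$-morphism $h = \Gamma\varphi$ gives $(\Gamma\varphi)^*(\im\,\widehat{x}^v) = \im\,\widehat{\varphi(x)}^v$, and a second use of Remark \ref{igual} over $H$ yields $\upsilon(\im\,\widehat{\varphi(x)}^v) = \varphi(x)$. The two composites agree on generators, hence everywhere.

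The main obstacle I anticipate is not any of the naturality verifications — those are formal once the generator identities are in hand — but rather the clean bookkeeping ensuring that $\Phi$ genuinely restricts to a map $A^* \to B^*$. Concretely, the crux is that the index reshuffling $Q \mapsto h^{-1}(Q)$ is compatible with the fiberwise Chang functor in such a way that generators map to generators; once the single identity $\Phi(\im\,\widehat{a}^v) = \im\,\widehat{h(a)}^v$ is secured, everything else is a consequence of $\Phi$ being a homomorphism and of $A^*$, $B^*$ being generated by the images ${A^\circ}$, ${B^\circ}$.
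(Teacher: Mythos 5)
Your proposal is correct and takes essentially the same route as the paper: you define the map on the ambient product by $\Phi(\sigma)(Q) = (h_Q)^*(\sigma(h^{-1}Q))$, restrict it to $A^* \mr{} B^*$ via the generator identity $\Phi(\im\,\widehat{a}^v) = \im\,\widehat{h(a)}^v$, and check naturality of $\upsilon$ on generators using (\ref{igual}), which is exactly the paper's argument. The only differences are cosmetic (your $\Phi$, $h_Q$ are the paper's $h^*$, $h|_P$, and you additionally record $(\mathrm{id})^* = \mathrm{id}$ and the preservation of the strong unit explicitly).
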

\begin{proof}
Let $P \in Sp(B)$. Then $h^{-1}P \in Sp{A}$ and there is a $MV$-algebra morphism $A/h^{-1}P \mr{h|_P} B/P$ well-defined by 
$h|_{P}([a]_{h^{-1}P}) = [h(a)]_P$. From the left diagram in
(\ref{chainfunctordiagram}) we have a commutative diagram:
$$
\xymatrix
   {
    A/h^{-1}P      \ar[d]_{h|_P} \ar[r]^-\im  
  & (A/h^{-1}P)^*  \ar[d]^{(h|_P)^*}
  \\    
    B/P  \ar[r]^-\im  
  & (B/P)^*
   }
$$
We define   
the right most vertical arrow in diagram (\ref{h^*}) by \mbox{$h^*(\sigma)(P) = (h|_{P})^*(\sigma(h^{-1}P)$}, for any 
$\sigma \in \textstyle\prod\limits_{P \in Sp(A)} (A/P)^*$. It is straightforward to check that $h^*$ is a morphism of $l$-groups and that the equation $(h_2h_1)^* = h_2^* h_1^*$ holds.
We prove now that it restricts to $A^* \mr{} B^*$. It is enough to check that it restricts to 
${A^\circ} \mr{} {B^\circ}$, which amounts to show that the exterior of diagram \ref{h^*} commutes: 
Let $a \in A$ and $P \in S_p(B)$, then
$h^*(\im \, \widehat{a}^v)(P) = 
(h|_{P})^*(\im \, \widehat{a}^v(h^{-1}P) =
 (h|_{P})^*(\im[a]_{h^{-1}P}) = \im h|_P ([a]_{h^{-1}P}) = 
 \im [h(a)]_P = \im \widehat{h(a)}^v(P)$, thus  $h^*(\im \,\widehat{a}^v) = \im \, \widehat{h(a)}^v$.
 It only remains to show that $\upsilon$ is a natural transformation, that is, for any $(G, u) \mr{\varphi} (H, u)$, 
$\upsilon\,(\Gamma\varphi)^* = \varphi \, \upsilon$, see the right  diagram in (\ref{chainfunctordiagram}). It is enough to check the equation on the generators   
\mbox{$\im\,\widehat{x}^v \in {\Gamma(G, u)^\circ}$,} 
\mbox{$x \in \Gamma(G,u)$.} Then:
$\upsilon\,(\Gamma\varphi)^* \im\,\widehat{x}^v = 
\upsilon \, \im \, \widehat{(\Gamma \varphi)x}^v =
\upsilon \, \im \, \widehat{\varphi x}^v =  \varphi x =
\varphi \, \upsilon \, \im \, \widehat{x}^v$, this last two equalities by (\ref{igual}) in Theorem \ref{upsiloniso}.
 \end{proof}

 
 \begin{sinnada} {\bf A final comment.} This finishes the proof that the functor 
 $\cc{G}_u \mr{\Gamma} \cc{M}$ establishes an equivalence of categories, by constructing explicitly a quasi-inverse 
 $\cc{M} \mr{(-)^*} \cc{G}_u$ directly related to Chang's original construction. Namely, we use the \mbox{$l$-group} determined pointwise by the Chang $l$-groups constructed on the fibers of the canonical subdirect product representation, and profit directly to be inside this large $l$-group in order to prove that the construction actually determines an equivalence of categories.
\end{sinnada} 
\begin{sinnada}
{\bf A word on the role of good-sequences.}
We refer to Theorem \ref{key}. Looking at the proof of item c), it is immediate to check by evaluating on each $i \in I$, that the sequence 
$a_k \in [0,u]$ that represents $x$ as $x = \sum_{k = 0}^{n-1} a_k$, $a_k = (x - ku) \wedge u \vee 0$, $0 \leq k \leq n-1$, satisfies $a_k \oplus a_{k+1} = a_k$. That is, it is a (canonical) good sequence \mbox{attached} to $x$, and it the unique such that represents $x$. This determines a bijection between the set of good sequences of $A = [0,u]$ and the \elu-group 
\mbox{$ [0, u]^* =
\{x \in G  \; such\;that\;  \exists\, n \geq 0,\; |x| \leq nu \}
\; \subset \; G$.} This bijection transports the structure of $G$ into a \mbox{\elu-group} structure on the set of good sequences, which is cumbersome and alien to previous intuition. In our proof of the theorem we work directly in the 
\mbox{\elu-group} 
$A^* \: \subset \: G = \prod_P A_P^*$, $P \in Sp(A)$, $A_P = A/P$, 
 instead of working with the good sequences and its complicated arithmetic. The subdirect representation theorem is anyway essential in both approaches as we know them today. 
 
It would be interesting to find a constructive proof (no Zorn's lemma) of this equivalence. Note that the construction 
$A^* = Gs(A)$ (set  of  good  sequences) is there, and the definition of its arithmetics and strong unit is constructive. Probably we have an isomorphism of abelian groups 
\mbox{$Gs(A) \cong Free(|A|)/\sim$,} the quotient of the free abelian group on the underlying set of $A$ by the congruence generated by the pairs 
$(a,b) \sim (a \oplus b, a \odot b)$. In this way we actually  have $A^*$, the challenge is to prove that it is an $l$-group, and that it is the required \elu-group that yields the equivalence, without using that there are enough primes. 
\end{sinnada}

\end{document}